\newfont{\footsc}{cmcsc10 at 8truept}
\newfont{\footbf}{cmbx10 at 8truept}
\newfont{\footrm}{cmr10 at 10truept}
\newtheorem{theorem}{Theorem}
\newtheorem{corollary}[theorem]{Corollary}
\newtheorem{lemma}[theorem]{Lemma}
\newenvironment{proof}[1][Proof]{\noindent{\textbf {#1}  }}  {\hfill$\Box$\bigskip}
\def\blfootnote{\xdef\@thefnmark{}\@footnotetext}
\begin{document}

\title{Maxima of the $Q$-index: graphs with bounded clique number}
\author{Nair Maria Maia de Abreu\thanks{Federal University of Rio de Janeiro, Brazil;
\textit{email: nair@pep.ufrj.br }} \thanks{Research supported by CNPq Grant PQ
300563/94-9(NV).}\ and Vladimir Nikiforov\thanks{Department of Mathematical
Sciences, University of Memphis, Memphis TN 38152, USA; email:
\textit{vnikifrv@memphis.edu}} \thanks{Research supported by NSF Grant
DMS-0906634.}\bigskip}
\date{}
\maketitle

\begin{abstract}
This paper gives a tight upper bound on the spectral radius of the signless
Laplacian of graphs of given order and clique number. More precisely, let $G$
be a graph of order $n,$ let $A$ be its adjacency matrix, and let $D$ be the
diagonal matrix of the row-sums of $A.$ If G has clique number $\omega,$ then
the largest eigenvalue $q\left(  G\right)  $ of the matrix $Q=A+D$ satisfies
\[
q\left(  G\right)  \leq2\left(  1-1/\omega\right)  n.
\]
If $G$ is a complete regular $\omega$-partite graph, then equality holds in
the above inequality.\medskip

This result confirms a conjecture of Hansen and Lucas.\medskip\bigskip

\textbf{Keywords: }\emph{signless Laplacian; spectral radius; clique; cliqie
number; eigenvalue bounds.}

\textbf{AMS classification: }05C50

\end{abstract}

\section{Introduction}

Given a graph $G,$ write $A$ for its adjacency matrix and let $D$ be the
diagonal matrix of the row-sums of $A,$ i.e., the degrees of $G.$ The matrix
$Q\left(  G\right)  =A+D,$ called the \emph{signless Laplacian} or the
$Q$-matrix of $G,$ has been intensively studied recently, see, e.g., the
survey of Cvetkovi\'{c} \cite{C10} and its references.

We shall write $\mu\left(  G\right)  $ and $q\left(  G\right)  $ for the
spectral radii of the adjacency matrix and the signless Laplacian of a graph
$G;$ note in particular that $q\left(  G\right)  $ is known as the
$Q$\emph{-index} of $G$ and this is the meaning used in the title of the paper.

A pioneering result of Cvetkovi\'{c} \cite{Cve72} states that if $G$ is a
graph of order $n,$ with chromatic number $\chi\left(  G\right)  =\chi,$ then
\begin{equation}
\mu\left(  G\right)  \leq\left(  1-1/\chi\right)  n. \label{Ci}%
\end{equation}
Subsequently, Wilf \cite{Wil86} strengthened this inequality to%
\[
\mu\left(  G\right)  \leq\left(  1-1/\omega\right)  n,
\]
where $\omega$ stands for the clique number of $G.$ Note that Wilf's result
implies the concise Tur\'{a}n theorem:
\begin{equation}
e\left(  G\right)  \leq\left(  1-1/\omega\right)  n^{2}/2, \label{Ti}%
\end{equation}
where $e\left(  G\right)  $ denotes the number of edges of $G.$

Recently, at least three papers (\cite{CaFa09},\cite{HaLu09},\cite{YWS11})
appeared proving that if $G$ is a graph of order $n$, then%
\begin{equation}
q\left(  G\right)  \leq2\left(  1-1/\chi\right)  n, \label{Xi}%
\end{equation}
which, due to the known inequality $q\left(  G\right)  \geq2\mu\left(
G\right)  ,$ also improves (\ref{Ci}). This result suggests a natural
improvement of all above inequalities, and indeed, Hansen and Lucas
\cite{HaLu09} conjectured that the chromatic number $\chi$ in (\ref{Xi}) can
be replaced by the clique number $\omega.$

In this paper we prove the conjecture of Hansen and Lucas, as stated in the
following theorem.

\begin{theorem}
\label{mth}If $G$ is a graph of order $n$, with clique number $\omega\left(
G\right)  =\omega,$ then%
\begin{equation}
q\left(  G\right)  \leq2\left(  1-1/\omega\right)  n. \label{min}%
\end{equation}

\end{theorem}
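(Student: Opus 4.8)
The plan is to prove the bound $q(G) \leq 2(1-1/\omega)n$ by induction on the clique number $\omega$, mirroring the structure of Wilf's inductive strengthening of the Cvetkovi\'{c}--Tur\'{a}n bound. Write $q = q(G)$ and let $\mathbf{x} = (x_1,\dots,x_n)$ be a unit eigenvector of $Q = A+D$ corresponding to $q$; since $Q$ is nonnegative and we may assume $G$ is connected (otherwise apply the argument to the component realizing $q$, whose order is at most $n$), the Perron--Frobenius theorem gives that $\mathbf{x}$ may be taken to be positive. The eigenvalue equation reads $q x_u = d(u) x_u + \sum_{v \sim u} x_v$ for every vertex $u$, where $d(u)$ is the degree of $u$.

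The base case $\omega = 1$ is trivial, since then $G$ has no edges, $Q = 0$, and $q = 0 = 2(1-1/1)n$. For the inductive step I would select a vertex $u$ at which the eigenvector entry $x_u$ is maximal, and consider the subgraph $G'$ induced on $u$ together with its neighborhood, or more cleverly remove a vertex so as to drop the clique number. The cleaner route is to pick a vertex $w$ lying in no clique of size $\omega$ if the graph is not $\omega$-clique-critical, and otherwise exploit structure: a natural target is to find a vertex whose deletion yields a graph $G''$ of order $n-1$ with clique number at most $\omega$, for which the inductive hypothesis $q(G'') \leq 2(1-1/\omega)(n-1)$ holds, and then control how much $q$ can increase under adding back that vertex. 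The key quantitative lever is the Rayleigh-type identity
\[
q = \sum_{uv \in E(G)} (x_u + x_v)^2,
\]
valid because $\langle Q\mathbf{x}, \mathbf{x}\rangle = \sum_{uv\in E}(x_u+x_v)^2$ for a unit vector $\mathbf{x}$, together with the contribution of edges incident to the deleted vertex.

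The heart of the argument, and the step I expect to be the main obstacle, is converting the combinatorial constraint imposed by clique number $\omega$ into a spectral inequality. For the adjacency matrix, Wilf's induction works because deleting a vertex of minimum degree in an edge-maximal $\omega$-clique-free graph interacts well with the Rayleigh quotient; but the signless Laplacian is subtler because the diagonal degree term couples the local degree structure to the global eigenvalue in a way that is not monotone under vertex deletion. Concretely, removing a vertex lowers the degrees of all its neighbors, which can \emph{decrease} the diagonal entries of $Q$ and thereby complicate the comparison of $q(G)$ with $q(G'')$. I would therefore anticipate that the decisive inequality requires carefully bounding the weighted neighborhood sum $\sum_{v \sim u} x_v$ against $(1-1/\omega)$ times a quantity comparable to $n\,x_u$, invoking the clique structure via a partition of the vertex set into at most $\omega$ classes (as in the Motzkin--Straus or Zykov-symmetrization circle of ideas) to exploit that no $(\omega+1)$-clique exists.

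Finally, to establish sharpness I would verify that the complete regular $\omega$-partite graph $K_{n/\omega,\dots,n/\omega}$ attains equality: this graph is $(1-1/\omega)n$-regular, so $D = (1-1/\omega)n\,I$ and $q = (1-1/\omega)n + \mu$, where $\mu = \mu(K_{n/\omega,\dots,n/\omega}) = (1-1/\omega)n$ is its adjacency spectral radius; hence $q = 2(1-1/\omega)n$, matching the bound. This confirms that no improvement of the constant is possible and pins down the extremal configuration named in the theorem statement.
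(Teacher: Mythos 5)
Your proposal is an outline rather than a proof: the step you yourself identify as ``the main obstacle'' --- converting the hypothesis $\omega(G)=\omega$ into a spectral inequality that closes the induction --- is exactly the content of the theorem, and it is left open. Worse, the route you sketch for it would not work. You propose to ``invoke the clique structure via a partition of the vertex set into at most $\omega$ classes (as in the Motzkin--Straus or Zykov-symmetrization circle of ideas).'' A $K_{\omega+1}$-free graph need not admit a partition into $\omega$ independent classes; that is a statement about the chromatic number, which can be arbitrarily large compared to $\omega$ (already for triangle-free graphs). If such a partition existed, the bound would follow from the already-known chromatic-number inequality (\ref{Xi}), and the whole point of the Hansen--Lucas conjecture is to dispense with it. Your other suggested lever --- deleting a vertex so that the clique number drops --- also fails in general: a single vertex deletion lowers $\omega$ only if that vertex lies in every maximum clique, which one cannot arrange. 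The Motzkin--Straus weighting does underlie Wilf's adjacency bound, but as you correctly observe the diagonal degree term of $Q$ breaks that argument, and you do not supply a replacement. The sharpness computation for the regular complete $\omega$-partite graph at the end is correct but is not in dispute.

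For comparison, the paper's proof is organized quite differently and the difference is instructive. It fixes $r$ and inducts on $n$ (not on $\omega$), assuming for contradiction that an extremal $K_{r+1}$-free graph $G$ violates the slightly weakened bound $q(G)\leq\frac{2r-2}{r}n+8$. The induction hypothesis applied to $G-u$, where $u$ carries the \emph{minimal} eigenvector entry $x_{\min}$, forces $x_{\min}$ to be large (Claim 2); Lemma \ref{le1} then forces the minimum degree $\delta$ to be large (Claim 3); and Tur\'{a}n's theorem (\ref{Ti}) caps $\delta$ at $\frac{\omega-1}{\omega}n$, yielding a contradiction for $r\geq5$. The cases $r=3,4$ are handled by the book Lemma \ref{le0}, which produces a large common neighborhood $W$ inducing a $K_{r-1}$-free graph and hence again caps $\delta$. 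Finally --- and this is a step entirely absent from your plan --- the additive constant $8$ is removed not by sharpening the induction but by invoking Theorem \ref{mT} on hereditary multiplicative properties: since $\mathcal{P}^{\ast}(K_{r+1})$ is hereditary and multiplicative, $q(G)\leq\nu(\mathcal{P}^{\ast}(K_{r+1}))\left\vert G\right\vert$, and the weak bound shows $\nu\leq2(1-1/r)$. Without some device of this kind (or the independent argument of \cite{HJZ11}), even a successful version of your induction would likely leave you with an unwanted additive error term.
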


The rest of the papers is organized as follows. We prove two supporting lemmas
of more general scope in the next section. In Section 3, a weaker form of the
inequality (\ref{min}) is proved. Finally, in Section 4 we state some results
about graph properties, which help us to complete the proof of Theorem
\ref{mth}.\medskip

Our notation follows essentially \cite{Bol98}. As usual, $G\left(  n\right)  $
stands for the set of graphs of order $n,$ and $K_{r}$ stands for the complete
graph of order $r.$ We write $\Gamma\left(  x\right)  $ for the set of
neighbors of a vertex $x.$

\section{Two lemmas}

Recall that a\emph{ book of size }$t$\emph{ }is a set of $t$ triangles sharing
a common edge.\medskip

Let us note first that Theorem \ref{mth} is rather easy for $\omega=2.$ In
fact, this case follows from a simple, but useful observation, given below.

\begin{lemma}
\label{le0} Every graph $G$ of order $n$ contains a book of size at least
$q\left(  G\right)  -n.$
\end{lemma}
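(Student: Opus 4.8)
The plan is to prove the equivalent upper bound $q(G) \le n + b$, where
$b := \max_{uv \in E(G)} |\Gamma(u) \cap \Gamma(v)|$ is the largest number of common neighbors taken over all edges. Since the triangles on an edge $uv$ are exactly the common neighbors of $u$ and $v$, a book based on $uv$ has size precisely $|\Gamma(u) \cap \Gamma(v)|$, so this inequality says that $G$ contains a book of size $b \ge q(G) - n$, which is the assertion. If $q(G) \le n$ there is nothing to prove, so I may assume $q(G) > n$, and in particular that $G$ has at least one edge.

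The first and main ingredient is the spectral bound
\begin{equation}
q(G) \le \max_{uv \in E(G)} (d_u + d_v). \label{e:degsum}
\end{equation}
I would derive this from the incidence-matrix identity. Writing $B$ for the (unsigned) vertex--edge incidence matrix of $G$, one checks directly that $BB^{\mathsf{T}} = A + D = Q(G)$ and $B^{\mathsf{T}}B = A(L(G)) + 2I$, where $L(G)$ is the line graph. Because $BB^{\mathsf{T}}$ and $B^{\mathsf{T}}B$ share the same nonzero spectrum and $G$ has an edge (so the top eigenvalue is positive), we get $q(G) = \mu(L(G)) + 2$. Applying the standard estimate $\mu(H) \le \Delta(H)$ to $H = L(G)$, and noting that the vertex of $L(G)$ corresponding to the edge $uv$ has degree $(d_u - 1) + (d_v - 1) = d_u + d_v - 2$, yields \eqref{e:degsum}.

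The second ingredient is an elementary counting step: for every edge $uv$,
\[
d_u + d_v = |\Gamma(u)| + |\Gamma(v)| = |\Gamma(u) \cup \Gamma(v)| + |\Gamma(u) \cap \Gamma(v)| \le n + |\Gamma(u) \cap \Gamma(v)| \le n + b,
\]
using only $\Gamma(u) \cup \Gamma(v) \subseteq V(G)$. Combining this with \eqref{e:degsum} gives $q(G) \le n + b$, and hence $b \ge q(G) - n$, so the edge attaining $b$ supports a book of the required size.

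The only genuinely substantial step is the spectral inequality \eqref{e:degsum}; everything else is bookkeeping, and the two sanity checks $q(K_n) = 2(n-1) = \max(d_u+d_v)$ and $q(K_{1,n-1}) = n = \max(d_u+d_v)$ both sit at equality, which is reassuring. The point I expect to need the most care is justifying $q(G) = \mu(L(G)) + 2$ — namely that the top eigenvalue of $Q(G)$ coincides with $2$ plus the spectral radius of the line graph, which rests on the Gram matrices $BB^{\mathsf{T}}$ and $B^{\mathsf{T}}B$ having identical nonzero spectra together with positivity of the leading eigenvalue. (As an alternative, \eqref{e:degsum} is a known bound on the $Q$-index and could simply be quoted.)
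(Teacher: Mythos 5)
Your proof is correct and follows essentially the same route as the paper: both rest on the bound $q(G)\leq\max_{uv\in E(G)}\left(d(u)+d(v)\right)$ followed by inclusion--exclusion on $\Gamma(u)\cap\Gamma(v)$ to extract the book. The only difference is that the paper quotes this degree-sum bound from Anderson and Morley, whereas you supply a self-contained (and correct) derivation via the incidence matrix and the line graph.
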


\begin{proof}
Recall that in \cite{AnMo85}, Anderson and Morley gave a bound on the spectral
radius of the Laplacian matrix of a graph, whose proof implies also that
\[
q\left(  G\right)  \leq\max_{uv\in E\left(  G\right)  }d\left(  u\right)
+d\left(  v\right)  .
\]
Now let us select an edge $uv\in E\left(  G\right)  $ such that%
\[
d\left(  u\right)  +d\left(  v\right)  =\max_{uv\in E\left(  G\right)
}d\left(  u\right)  +d\left(  v\right)  .
\]
Obviously, the number of triangles containing the edge $uv$ is precisely equal
to $\left\vert \Gamma\left(  u\right)  \cap\Gamma\left(  v\right)  \right\vert
.$ By the inclusion-exclusion principle, we find that
\[
\left\vert \Gamma\left(  u\right)  \cap\Gamma\left(  v\right)  \right\vert
=\left\vert \Gamma\left(  u\right)  \right\vert +\left\vert \Gamma\left(
v\right)  \right\vert -\left\vert \Gamma\left(  u\right)  \cup\Gamma\left(
v\right)  \right\vert \geq d\left(  u\right)  +d\left(  v\right)  -n,
\]
and so, $G$ contains a book of size at least
\[
d\left(  u\right)  +d\left(  v\right)  -n\geq q\left(  G\right)  -n.
\]

\end{proof}

\begin{corollary}
If G is a triangle-free graph of order $n$, then $q\left(  G\right)  \leq n.$
\end{corollary}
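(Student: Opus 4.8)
The plan is to invoke Lemma~\ref{le0} directly, combining it with the elementary observation that a triangle-free graph contains no book of positive size. Recall that a book of size $t$ is a collection of $t$ triangles sharing a common edge, so any book of size $t \geq 1$ contains at least one triangle. Since $G$ is assumed triangle-free, $G$ contains no triangle at all; equivalently, for every edge $uv \in E(G)$ the vertices $u$ and $v$ have no common neighbor, as any such neighbor would close a triangle. Hence the largest book contained in $G$ has size $0$.

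On the other hand, Lemma~\ref{le0} asserts that $G$ contains a book of size at least $q(G) - n$. Since the maximum book size in $G$ is $0$, this forces
\[
q(G) - n \leq 0,
\]
which is exactly the claimed bound $q(G) \leq n$. I do not expect any real obstacle: the statement is an immediate specialization of Lemma~\ref{le0} to the situation where the book size is forced to vanish. The only point worth a word is the degenerate case in which $q(G) - n$ is negative; then the assertion ``$G$ contains a book of size at least $q(G)-n$'' holds vacuously, but it still delivers the inequality $q(G) \leq n$, so the conclusion is unaffected.
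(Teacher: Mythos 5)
Your argument is correct and is exactly the one the paper intends: in the proof of Theorem~\ref{mthx} the authors note that a $K_3$-free graph has no book of positive size, so Lemma~\ref{le0} forces $q(G)-n\leq 0$. Your handling of the degenerate case where $q(G)-n$ is negative is a harmless extra remark; nothing further is needed.
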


In the following lemma we give a bound on the minimal entry of an eigenvector
to $q\left(  G\right)  ,$ which we shall need in the proof of Theorem
\ref{mth}, but which proved to be useful in other questions.

\begin{lemma}
\label{le1} Let $G$ be a graph of order $n$, with $q\left(  G\right)  =q,$ and
minimum degree $\delta\left(  G\right)  =\delta.$ If $\left(  x_{1}%
,\ldots,x_{n}\right)  $ is a unit eigenvector to $q,$ then the value $x_{\min
}=\min\left\{  x_{1},\ldots,x_{n}\right\}  $ satisfies the inequality%
\[
x_{\min}^{2}\left(  q^{2}-2q\delta+n\delta\right)  \leq\delta.
\]

\end{lemma}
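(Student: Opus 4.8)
The plan is to read off the eigen-equation $Qx = qx$ at a vertex of \emph{minimum degree} (rather than at the vertex realizing $x_{\min}$) and then to bring in $x_{\min}$ only through the non-neighbours of that vertex. Since $Q = A+D$ is entrywise nonnegative, Perron--Frobenius lets me take the unit eigenvector to be nonnegative, so $0 \le x_{\min} \le x_i$ for every $i$; this is what will later let me pass from $x_j \ge x_{\min}$ to $x_j^2 \ge x_{\min}^2$. Fix a vertex $v$ with $d(v) = \delta$. The $v$-th coordinate of $Qx = qx$ reads
\[
(q-\delta)x_v = \sum_{j\sim v} x_j .
\]

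Next I would square this and apply Cauchy--Schwarz to the right-hand side, which has exactly $\delta$ summands, giving $(q-\delta)^2 x_v^2 \le \delta \sum_{j\sim v} x_j^2$. The crucial estimate is an upper bound on $\sum_{j\sim v} x_j^2$: using $\sum_{j\ne v} x_j^2 = 1 - x_v^2$ and discarding the $n-1-\delta$ non-neighbours of $v$, each of which contributes at least $x_{\min}^2$, I obtain
\[
\sum_{j\sim v} x_j^2 \;\le\; 1 - x_v^2 - (n-1-\delta)\,x_{\min}^2 .
\]
Feeding this into the Cauchy--Schwarz bound and moving the $\delta x_v^2$ term to the left yields $x_v^2\big[(q-\delta)^2 + \delta\big] \le \delta - \delta(n-1-\delta)x_{\min}^2$.

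To finish I would replace $x_v^2$ by the smaller $x_{\min}^2$ on the left (legitimate since the coefficient $(q-\delta)^2+\delta$ is nonnegative), which gives
\[
x_{\min}^2\big[(q-\delta)^2 + \delta + \delta(n-1-\delta)\big] \le \delta .
\]
The bracket collapses to $(q-\delta)^2 + \delta(n-\delta) = q^2 - 2q\delta + n\delta$, which is exactly the asserted inequality.

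The only genuinely delicate point is the decision to anchor the eigen-equation at a minimum-degree vertex while measuring the "lost" mass via $x_{\min}$: running the same argument at the vertex achieving $x_{\min}$ would replace $\delta$ by that vertex's possibly larger degree and produce a strictly weaker, non-tight bound. One must also verify that the two monotonicity substitutions push in the correct direction — $x_v^2 \ge x_{\min}^2$ is used against a positive coefficient on the left, and $x_j^2 \ge x_{\min}^2$ is used against a negative coefficient on the right — and both do. Finally, tracking the equality cases (Cauchy--Schwarz forcing all neighbour values equal, together with $x_v = x_{\min}$ and all non-neighbours equal to $x_{\min}$) shows equality holds for connected regular graphs, where the Perron eigenvector is uniform and $q = 2\delta$; in particular for the complete regular $\omega$-partite graphs, so the bound is tight.
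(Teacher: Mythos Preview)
Your proof is correct and follows essentially the same route as the paper's: anchor the eigen-equation at a vertex of minimum degree, apply Cauchy--Schwarz to $\sum_{j\sim v} x_j$, and bound the neighbour mass by discarding at least $x_{\min}^2$ for each non-neighbour. The only cosmetic difference is that you keep $x_v^2$ separate until the end (bounding $\sum_{j\sim v} x_j^2 \le 1 - x_v^2 - (n-1-\delta)x_{\min}^2$), whereas the paper immediately lumps $v$ in with its $n-\delta$ non-neighbours and writes $\sum_{j\sim v} x_j^2 \le 1 - (n-\delta)x_{\min}^2$; after your final substitution $x_v^2 \ge x_{\min}^2$ the two computations coincide.
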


\begin{proof}
If $x_{\min}=0,$ the assertion holds trivially, so suppose that $x_{\min}>0,$
which implies also that $\delta>0$. Now select $u\in V\left(  G\right)  $ so
that $d\left(  u\right)  =\delta.$ We have%
\[
qx_{u}=\delta x_{u}+%
{\textstyle\sum\limits_{i\in\Gamma\left(  u\right)  }}
x_{i}%
\]
and therefore,%
\begin{align*}
\left(  q-\delta\right)  ^{2}x_{\min}^{2}  &  \leq\left(  q-\delta\right)
^{2}x_{u}^{2}=\left(
{\textstyle\sum\limits_{i\in\Gamma\left(  u\right)  }}
x_{i}\right)  ^{2}\leq\delta%
{\textstyle\sum\limits_{i\in\Gamma\left(  u\right)  }}
x_{i}^{2}\leq\delta\left(  1-%
{\textstyle\sum\limits_{i\in V\left(  G\right)  \backslash\Gamma\left(
u\right)  }}
x_{i}^{2}\right) \\
&  \leq\delta\left(  1-\left(  n-\delta\right)  x_{\min}^{2}\right)
=\delta-\left(  \delta n-\delta^{2}\right)  x_{\min}^{2},
\end{align*}
implying that $x_{\min}^{2}\left(  q^{2}-2q\delta+n\delta\right)  \leq\delta,$
as claimed.
\end{proof}

\section{A weaker form of Theorem \ref{mth}}

To prove Theorem \ref{mth} we first prove a weaker form of it, which we shall
use later to deduce Theorem \ref{mth} itself. This approach can be used in
other extremal problems.

\begin{theorem}
\label{mthx}If $G$ is a graph of order $n$, with clique number $\omega\left(
G\right)  =\omega,$ then%
\[
q\left(  G\right)  \leq\frac{2\omega-2}{\omega}n+8.
\]

\end{theorem}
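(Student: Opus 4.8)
The plan is to prove the weaker bound $q(G) \le \frac{2\omega-2}{\omega}n + 8$ by induction on $\omega$, using the two lemmas as the engine. The base case $\omega = 2$ is already handled: by Lemma~\ref{le0} a triangle-free graph contains a book of size at least $q(G) - n$, but a triangle-free graph contains no triangles at all, so $q(G) - n \le 0$, giving $q(G) \le n = \frac{2\cdot 2 - 2}{2}n \le \frac{2\omega-2}{\omega}n + 8$. For the inductive step I would assume the bound holds for graphs of clique number less than $\omega$ and try to reduce a graph $G$ with $\omega(G) = \omega$ to a smaller-clique situation by deleting a carefully chosen vertex.

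The natural vertex to delete is one of small degree, so here is where Lemma~\ref{le1} enters. First I would reduce to the case where $G$ is connected and $q = q(G)$ is large — specifically, if $\delta(G)$ is small, then deleting a minimum-degree vertex $u$ changes $q$ by a controlled amount, since removing $u$ lowers the maximum row-sum of $Q$ and the eigenvalue cannot drop by more than roughly $2\delta$. The idea is to show that either $\delta$ is small enough that repeatedly deleting low-degree vertices keeps us within the additive slack of $8$, or $\delta$ is large. If $\delta$ is large, Lemma~\ref{le1} forces $x_{\min}$ to be small, which means the eigenvector is spread out and concentrated on a dense set; combined with the fact that $q \ge 2\mu(G)$ and the concise Tur\'an bound $e(G) \le (1-1/\omega)n^2/2$, I would derive a contradiction with $q$ being close to $2n$ unless $G$ already looks like a complete $\omega$-partite graph.

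Concretely, the key inequality from Lemma~\ref{le1} is $x_{\min}^2(q^2 - 2q\delta + n\delta) \le \delta$. If $q$ is close to $\frac{2\omega-2}{\omega}n$ and $\delta$ is a constant fraction of $n$, the quadratic $q^2 - 2q\delta + n\delta$ is positive and of order $n^2$, forcing $x_{\min}^2 = O(1/n)$. I would then estimate, using the eigenvalue equation $q x_v = d(v)x_v + \sum_{i \sim v} x_i$ for a vertex realizing $x_{\min}$, that its degree must be nearly $n$, and iterate this to show many vertices have degree close to $(1-1/\omega)n$. This near-regularity, together with the clique-number constraint via Tur\'an-type counting, pins $q$ below the target. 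The additive constant $8$ is the slack that absorbs the error terms in all these estimates.

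The main obstacle I expect is the interplay between the two regimes: making the "small $\delta$, delete and induct" argument and the "large $\delta$, use the eigenvector spread" argument meet cleanly at a threshold, and quantifying the eigenvalue perturbation under vertex deletion sharply enough that the accumulated additive error stays bounded by the constant $8$ rather than growing with the number of deletions. Getting the book-size bound of Lemma~\ref{le0} to interact with the clique number — a book of size $t$ sharing an edge does not by itself create a large clique unless the book vertices are mutually adjacent — is the subtle structural point that the harder Section~4 results are presumably designed to resolve, so for this weaker theorem I would lean on the cruder degree and eigenvector estimates and accept the loss reflected in the $+8$.
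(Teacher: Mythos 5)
There is a genuine gap, and it sits at the very root of your plan: you propose induction on $\omega$, reducing to ``a smaller-clique situation by deleting a carefully chosen vertex.'' Deleting a vertex does not reduce the clique number — $G-u$ has clique number $\omega$ again unless $u$ lies in every maximum clique — so the inductive hypothesis for clique number $\omega-1$ is simply not applicable to $G-u$, and no choice of vertex fixes this in general. The paper instead fixes $r=\omega$ throughout and inducts on $n$: the deleted vertex leaves a $K_{r+1}$-free graph of order $n-1$, to which the hypothesis applies with the \emph{same} $r$. A second fatal point is the step you yourself flag as the main obstacle: ``repeatedly deleting low-degree vertices keeps us within the additive slack of $8$.'' The perturbation per deletion does not vanish, the number of deletions can be $\Theta(n)$, and the accumulated error is unbounded; no constant can absorb it. The paper avoids any iteration by deleting the single vertex $u$ attaining $x_{\min}$ and using the exact Rayleigh decomposition $q_n=\sum_{ij\in E(G-u)}(x_i+x_j)^2+\sum_{j\in\Gamma(u)}(x+x_j)^2$ together with $\sum_{ij\in E(G-u)}(x_i+x_j)^2\le(1-x^2)\,q(G-u)\le(1-x^2)\,q_{n-1}$, which yields a clean one-step recursion (Claim 1) with no accumulating loss.

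Your use of the two lemmas is also inverted relative to what actually closes the argument. You want large $\delta$ to force $x_{\min}$ small and then reach a contradiction via $q\ge 2\mu(G)$ and Tur\'an; but $q\ge 2\mu$ is a \emph{lower} bound on $q$ and gives no upper control, and the chain of ``near-regularity'' estimates you gesture at is not carried out. The paper runs the logic the other way: the induction step plus the assumed failure of the bound force $x^2$ to be \emph{large} (Claim 2, roughly $x^2>1/(n+O(1))$); Lemma~\ref{le1} bounds $x^2$ from \emph{above} by $\delta/(q^2-2q\delta+n\delta)$; combining the two gives a lower bound on $\delta$ (Claim 3) that contradicts the Tur\'an consequence $\delta\le\frac{\omega-1}{\omega}n$ once $r\ge 5$. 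For $r=3,4$ the Tur\'an bound on $\delta$ is not strong enough, and that is where Lemma~\ref{le0} enters: the book of size $>\frac{r-2}{r}n+8$ has a common neighborhood $W$ inducing a $K_{r-1}$-free graph, which caps $\delta$ more tightly and restores the contradiction. So while you have correctly identified the ingredients (the two lemmas, Tur\'an, an eigenvector argument), the induction parameter, the deletion mechanism, and the direction in which Lemma~\ref{le1} is exploited all need to be changed for the proof to go through.
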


\begin{proof}
Fix an integer $r\geq2$ and set for short%
\[
q_{n}=\max_{G\in G\left(  n\right)  \text{ and }G\text{ is }K_{r+1}%
\text{-free}}q\left(  G\right)
\]
Clearly, to prove Theorem \ref{mthx} it is enough to show that
\begin{equation}
q_{n}\leq\frac{2r-2}{r}n+8. \label{in0}%
\end{equation}

Note that if $G$ is a $K_{3}$-free graph, then it has no book of positive size
and so Lemma \ref{le0} implies that $q\left(  G\right)  \leq n,$ proving the
theorem in this case. Thus, in the rest of the proof we shall assume that
$r\geq3.$ We shall proceed by induction of $n.$ If $n\leq r,$ it is known
that
\[
q\left(  G\right)  \leq2r-2\leq\frac{2r-2}{r}n,
\]
so the assertion holds whenever $n\leq r.$ Assume now that $n>r$ and that the
assertion holds for all $n^{\prime}<n.$ Assume for a contradiction that
inequality (\ref{in0}) fails, that is to say,
\begin{equation}
q_{n}>\frac{2r-2}{r}n+8, \label{in00}%
\end{equation}
and let $G\in G\left(  n\right)  $ be a $K_{r+1}$-free such that $q\left(
G\right)  =q_{n}.$ Let $\mathbf{x}=\left(  x_{1},\ldots,x_{n}\right)  $ be
unit eigenvector to $q_{n}$ and set $x=\min\left\{  x_{1},\ldots
,x_{n}\right\}  .$

For the sake of the reader the rest of our proof is split into several formal
claims.$\medskip$

\textbf{Claim 1. }%
\[
\left(  1-2x^{2}\right)  q_{n}\leq\left(  1-x^{2}\right)  q_{n-1}-nx^{2}+1
\]
\medskip

\emph{Proof. }\ Recall first that
\begin{equation}
q\left(  G\right)  =\left\langle Q\mathbf{x},\mathbf{x}\right\rangle
=\sum_{ij\in E\left(  G\right)  }\left(  x_{i}+x_{j}\right)  ^{2}. \label{Ral}%
\end{equation}
Let $u$ be a vertex for which $x_{u}=x,$ and write $G-u$ for the graph
obtained by removing the vertex $u.$ We have,
\begin{align}
q_{n}  &  =\sum_{ij\in E\left(  G\right)  }\left(  x_{i}+x_{j}\right)
^{2}=\sum_{ij\in E\left(  G-u\right)  }\left(  x_{i}+x_{j}\right)  ^{2}%
+\sum_{j\in\Gamma\left(  u\right)  }\left(  x+x_{j}\right)  ^{2}\nonumber\\
&  =\sum_{ij\in E\left(  G-u\right)  }\left(  x_{i}+x_{j}\right)
^{2}+d\left(  u\right)  x^{2}+2x\sum_{j\in\Gamma\left(  u\right)  }x_{j}%
+\sum_{j\in\Gamma\left(  u\right)  }x_{j}^{2} \label{in1}%
\end{align}
Hence, the Rayleigh principle implies that
\[
\left(  1-x^{2}\right)  q\left(  G-u\right)  \geq\sum_{ij\in E\left(
G-u\right)  }\left(  x_{i}+x_{j}\right)  ^{2},
\]
and since the graph $G-u$ is $K_{r+1}$-free, we see that
\begin{equation}
\sum_{ij\in E\left(  G-u\right)  }\left(  x_{i}+x_{j}\right)  ^{2}\leq\left(
1-x^{2}\right)  q\left(  G-u\right)  \leq\left(  1-x^{2}\right)  q_{n-1}.
\label{in2}%
\end{equation}
Now, using the equation
\[
\left(  q_{n}-d\left(  u\right)  \right)  x=\sum_{j\in\Gamma\left(  u\right)
}x_{j}%
\]
we find that%
\begin{align*}
d\left(  u\right)  x^{2}+2x\sum_{j\in\Gamma\left(  u\right)  }x_{j}+\sum
_{j\in\Gamma\left(  u\right)  }x_{j}^{2}  &  =d\left(  u\right)
x^{2}+2\left(  q_{n}-d\left(  u\right)  \right)  x^{2}+\sum_{j\in\Gamma\left(
u\right)  }x_{j}^{2}\\
&  \leq d\left(  u\right)  x^{2}+2\left(  q_{n}-d\left(  u\right)  \right)
x^{2}+1-\left(  n-d\left(  u\right)  \right)  x^{2}\\
&  =2q_{n}x^{2}-nx^{2}+1.
\end{align*}
This inequality, together with (\ref{in1}) and (\ref{in2}) implies the
claim.$\hfill\square\bigskip$

\textbf{Claim 2. }The following inequality holds%
\[
\left(  n+\frac{8r}{r-2}+\frac{2r-2}{r-2}\right)  x^{2}>1.
\]
\medskip

\emph{Proof. }By the induction assumption we have
\[
q_{n-1}\leq\frac{2r-2}{r}\left(  n-1\right)  +8.
\]
From Claim 1 and inequality (\ref{in00}), we obtain
\[
\left(  \frac{2r-2}{r}n+8\right)  \left(  1-2x^{2}\right)  <\left(
\frac{2r-2}{r}\left(  n-1\right)  +8\right)  \left(  1-x^{2}\right)
-nx^{2}+1,
\]
and so%
\[
\frac{2r-2}{r}n+8<\frac{2r-2}{r}\left(  n-1\right)  +8-\left(  \frac{2r-2}%
{r}\left(  n-1\right)  +8\right)  x^{2}+2\left(  \frac{2r-2}{r}n+8\right)
x^{2}-nx^{2}+1.
\]
Further simplifications give
\begin{align*}
\frac{r-2}{r}  &  =\frac{2r-2}{r}-1<-\left(  \frac{2r-2}{r}\left(  n-1\right)
+8\right)  x^{2}+2\left(  \frac{2r-2}{r}n+8\right)  x^{2}-nx^{2}\\
&  =\left(  \frac{r-2}{r}n+8+\frac{2r-2}{r}\right)  x^{2},
\end{align*}
and finally%
\[
1<\left(  n+\frac{8r}{r-2}+\frac{2r-2}{r-2}\right)  x^{2},
\]
as required.$\hfill\square\bigskip$

\textbf{Claim 3. }The following inequality holds\textbf{ }%
\[
\left(  \frac{4\left(  r-1\right)  }{r}n+16+\frac{8r}{r-2}+\frac{2r-2}%
{r-2}\right)  \delta>\frac{4\left(  r-1\right)  ^{2}}{r^{2}}n^{2}%
+\frac{32\left(  r-1\right)  }{r}n
\]
\medskip

\emph{Proof. }Using Lemma \ref{le1}, and the fact that
\begin{equation}
q^{2}-2q\delta+n\delta=\left(  q-\delta\right)  ^{2}+\delta\left(
n-\delta\right)  >0, \label{in4}%
\end{equation}
we first see that
\[
x^{2}\leq\frac{\delta}{q^{2}-2q\delta+n\delta}.
\]
Therefore, by Claim 2,
\[
q^{2}-2q\delta+n\delta\leq\left(  n+\frac{8r}{r-2}+\frac{2r-2}{r-2}\right)
\delta.
\]
Obviously, relation (\ref{in4}) implies also that $q^{2}-2q\delta+n\delta$
increases in $q,$ and so
\begin{align*}
\left(  n+\frac{8r}{r-2}+\frac{2r-2}{r-2}\right)  \delta &  \geq
q^{2}-2q\delta+n\delta\\
&  \geq\frac{4\left(  r-1\right)  ^{2}}{r^{2}}n^{2}+\frac{32\left(
r-1\right)  }{r}n+64-\frac{4\left(  r-1\right)  }{r}n\delta-16\delta+n\delta\\
&  >\frac{4\left(  r-1\right)  ^{2}}{r^{2}}n^{2}+\frac{32\left(  r-1\right)
}{r}n-\frac{4\left(  r-1\right)  }{r}n\delta-16\delta+n\delta.
\end{align*}
Collecting all terms involving $\delta$ on the left-hand side, we find that
\[
\left(  \frac{4\left(  r-1\right)  }{r}n+16+\frac{8r}{r-2}+\frac{2r-2}%
{r-2}\right)  \delta>\frac{4\left(  r-1\right)  ^{2}}{r^{2}}n^{2}%
+\frac{32\left(  r-1\right)  }{r}n,
\]
as claimed$\hfill\square\bigskip$

\textbf{Claim 4.} The theorem holds for $r\geq5.\medskip$\textbf{ }

\emph{Proof. }The Tur\'{a}n theorem (\ref{Ti}) implies that
\[
\delta\left(  G\right)  \leq\frac{2e\left(  G\right)  }{n}\leq\frac{\omega
-1}{\omega}n,
\]
This bound and Claim 3 imply that,
\begin{align*}
\frac{4\left(  r-1\right)  ^{2}}{r^{2}}n^{2}+\frac{32\left(  r-1\right)  }%
{r}n  &  <\left(  \frac{4\left(  r-1\right)  }{r}n+16+\frac{8r}{r-2}%
+\frac{2r-2}{r-2}\right)  \delta\\
&  \leq\left(  \frac{4\left(  r-1\right)  }{r}n+16+\frac{8r}{r-2}+\frac
{2r-2}{r-2}\right)  \frac{r-1}{r}n\\
&  =\frac{4\left(  r-1\right)  ^{2}}{r^{2}}n^{2}+\frac{16\left(  r-1\right)
}{r}n+\frac{8\left(  r-1\right)  }{r-2}n+\frac{2\left(  r-1\right)  ^{2}%
}{\left(  r-2\right)  r}n.
\end{align*}
After some algebra, we obtain consecutively
\begin{align*}
\frac{16\left(  r-1\right)  }{r}n  &  <\frac{8\left(  r-1\right)  }%
{r-2}n+\frac{2\left(  r-1\right)  ^{2}}{\left(  r-2\right)  r}n,\\
16\left(  r-2\right)   &  <8r+2\left(  r-1\right)  ,\\
r  &  <5.
\end{align*}
This is a contradiction for $r\geq5$, which proves the theorem for
$r\geq5.\hfill\square\bigskip$

It remains to prove the assertion for $r=3$ and $4,$ where we shall use Lemma
\ref{le0}. This lemma together with the assumption (\ref{in00}) implies that
$G$ contains a book of size at least
\begin{equation}
q_{n}-n>\frac{2r-2}{r}n+8-n=\frac{r-2}{r}n+8. \label{bW}%
\end{equation}

Select a book whose size satisfies (\ref{bW}), let $uv$ be the common edge of
this book, and write $W$ for set of common neighbors of $u$ and $v.$ Since we
assumed that $G$ contains no $K_{r+1}$, the graph $G\left[  W\right]  $
induced by the set $W$ contains no $K_{r-1}.$

\textbf{Claim 5 }The theorem holds for $r=3.\medskip$

\emph{Proof. }For $r=3$ the set $W$ is independent and by (\ref{bW}) we see
that
\[
\left\vert W\right\vert >\frac{n}{3}+8.
\]
A vertex in $W$ can be joined to at most $n-\left\vert W\right\vert $ vertices
of $G$ and so
\[
\delta\leq n-\left\vert W\right\vert \leq\frac{2}{3}n-8.
\]
Using Claim 3, we obtain
\begin{align*}
\frac{16}{9}n^{2}+\frac{64}{3}n  &  <\left(  \frac{8}{3}n+16+24+4\right)
\left(  \frac{2}{3}n-8\right) \\
&  =8n+\frac{16}{9}n^{2}-352,
\end{align*}
and so,
\[
\frac{40}{3}n\leq-352,
\]
a contradiction proving the theorem for $r=3.\hfill\square\bigskip$

\textbf{Claim 6 }The theorem holds for $r=4.\medskip$

\emph{Proof. }For $r=4,$ the set $W$ induces a triangle-free graph by
(\ref{bW}) we have
\[
\left\vert W\right\vert >\frac{n}{2}+8.
\]
According to (\ref{Ti}), the minimum degree of the graph $G\left[  W\right]  $
induced by $W$ is at most $\left\vert W\right\vert /2.$ Let $u\in W$ be a
vertex with minimum degree in $G\left[  W\right]  .$ Since $u$ is joined to at
most $n-\left\vert W\right\vert $ vertices outside of $W,$ we see that
\[
\delta\leq\frac{\left\vert W\right\vert }{2}+n-\left\vert W\right\vert
=n-\frac{\left\vert W\right\vert }{2}<n-\frac{n}{4}-4=\frac{3}{4}n-4.
\]
Again by Claim 3, we obtain%
\[
\frac{9}{4}n^{2}+24n<\left(  \frac{12}{4}n+35\right)  \left(  \frac{3}%
{4}n-4\right)  =\frac{9}{4}n^{2}+\frac{57}{4}n+-140
\]
and so%
\[
\frac{39}{4}n<-140
\]
a contradiction proving the theorem for $r=4.\hfill\square\bigskip$

At this point all cases have been considered, the induction step is completed
and Theorem \ref{mthx} is proved.
\end{proof}

\section{Graph properties and proof of Theorem \ref{mth}}

Before continuing with the proof of Theorem \ref{mth}, we introduce relevant
results about graph properties.

Recall that a \emph{graph property} $\mathcal{P}$ is a class of graphs closed
under isomorphisms. A property $\mathcal{P}$ is called \emph{hereditary} if it
is closed under taking \emph{induced} subgraphs.\medskip

Given a graph property $\mathcal{P}$, we write $\mathcal{P}_{n}$ for the set
of graphs of order $n$ belonging to $\mathcal{P}$ and we set
\[
q\left(  \mathcal{P}_{n}\right)  =\max_{G\in\mathcal{P}_{n}}\text{ }q\left(
G\right)  .
\]

\begin{theorem}
\label{limT}If $\mathcal{P}$ is a hereditary graph property, then the limit
\[
\lim_{n\rightarrow\infty}\frac{q\left(  \mathcal{P}_{n}\right)  }{n}%
\]
exists.
\end{theorem}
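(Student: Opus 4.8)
The plan is to set $f(n)=q(\mathcal{P}_n)$ and to show that the normalized sequence $f(n)/(n-1)$ is non-increasing; since it is also bounded below by $0$, it converges, and $f(n)/n=\frac{n-1}{n}\cdot\frac{f(n)}{n-1}$ then converges to the same limit. (Throughout I assume $\mathcal{P}_n\neq\emptyset$ for every $n$, which is forced whenever $\mathcal{P}$ contains graphs of arbitrarily large order, since a hereditary property is closed under vertex deletion.) First I would record two ingredients: the boundedness $f(n)\le 2(n-1)$, which follows from $q(G)\le\max_{uv\in E}d(u)+d(v)\le 2(n-1)$ exactly as in the proof of Lemma \ref{le0}; and the quadratic-form identity $q(G)=\sum_{ij\in E(G)}(x_i+x_j)^2$ valid for any unit eigenvector $\mathbf{x}$ to $q(G)$.

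The heart of the argument is a vertex-deletion averaging step. Fix $n\ge 3$, let $G\in\mathcal{P}_n$ attain $q(G)=f(n)$, and let $\mathbf{x}=(x_1,\dots,x_n)$ be a unit eigenvector to $q(G)$. For each vertex $v$, the hereditary hypothesis gives $G-v\in\mathcal{P}_{n-1}$, so applying the Rayleigh principle to the restriction $\mathbf{x}'$ of $\mathbf{x}$ to $V(G)\setminus\{v\}$ yields
\[
\sum_{ij\in E(G-v)}(x_i+x_j)^2=\langle Q(G-v)\mathbf{x}',\mathbf{x}'\rangle\le\left\Vert\mathbf{x}'\right\Vert^2 q(G-v)\le(1-x_v^2)f(n-1).
\]
(The degenerate case $\Vert\mathbf{x}'\Vert^2=0$ forces $G$ to be edgeless, where the claim is trivial.) Summing over all $v\in V(G)$ and using that each edge $ij$ survives in exactly $n-2$ of the graphs $G-v$, the left-hand side becomes $(n-2)\sum_{ij\in E(G)}(x_i+x_j)^2=(n-2)f(n)$, while the right-hand side equals $f(n-1)\sum_v(1-x_v^2)=(n-1)f(n-1)$ because $\sum_v x_v^2=1$. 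Hence
\[
(n-2)f(n)\le(n-1)f(n-1),\qquad\text{i.e.}\qquad\frac{f(n)}{n-1}\le\frac{f(n-1)}{n-2}.
\]

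Finally I would conclude: the sequence $a_n:=f(n)/(n-1)$ is non-increasing for $n\ge 2$ and bounded below by $0$, hence converges to some limit $L\ge 0$; then $f(n)/n=\frac{n-1}{n}a_n\to L$, which is precisely the assertion. The main obstacle is identifying the correct normalization — dividing $f(n)$ by $n-1$ rather than by $n$ is what turns the double-counting inequality into an \emph{exact} monotonicity, whereas $f(n)/n$ itself is not monotone. Reaching the inequality requires pairing the hereditary property (to keep every $G-v$ inside $\mathcal{P}$) with the Rayleigh-quotient bound on each deleted subgraph; the remainder is routine bookkeeping in the edge count and the passage to the limit.
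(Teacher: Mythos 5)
Your proof is correct, and it is essentially the intended argument: the paper states Theorem \ref{limT} without proof (deferring to the companion paper \cite{AbNi11}), but the standard proof is precisely your monotonicity of $q(\mathcal{P}_n)/(n-1)$, obtained by deleting each vertex of an extremal graph in turn, bounding $\sum_{ij\in E(G-v)}(x_i+x_j)^2\le(1-x_v^2)\,q(\mathcal{P}_{n-1})$ via the hereditary hypothesis and the Rayleigh principle, and summing over $v$. The single-vertex version of exactly this deletion step appears as Claim 1 in the paper's proof of Theorem \ref{mthx}, so your argument matches the paper's methods throughout.
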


Given a hereditary property $\mathcal{P},$ write $\nu(\mathcal{P})$ for the
limit established in Theorem \ref{limT}.\medskip

For any graph $G$ and integer $t\geq1,$ write $G^{\left(  t\right)  }$ for the
graph obtained by replacing each vertex $u$ of $G$ by a set of $t$ independent
vertices and each edge $uv$ of $G$ by a complete bipartite graph $K_{t,t}.$
This construction is known as a \textquotedblleft blow-up\textquotedblright%
\ of $G.$

A graph property $\mathcal{P}$ is called\emph{ multiplicative }if
$G\in\mathcal{P}$ implies that $G^{\left(  t\right)  }\in\mathcal{P}$ for all
$t\geq1.$\medskip

We shall deduce the proof of Theorem \ref{mth} from the following theorem,
proved in \cite{AbNi11}.

\begin{theorem}
\label{mT}If $\mathcal{P}$ is a hereditary and multiplicative graph property,
then
\[
q(G)\leq\nu\left(  \mathcal{P}\right)  \left\vert G\right\vert
\]
for every $G\in\mathcal{P}.$
\end{theorem}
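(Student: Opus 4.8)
The plan is to control $q\left(G\right)$ by comparing $G$ with its blow-ups $G^{\left(t\right)}$ and then passing to the limit guaranteed by Theorem \ref{limT}. The crucial structural fact I would establish first is the scaling relation
\[
q\left(G^{\left(t\right)}\right)\geq t\,q\left(G\right)\qquad\text{for every }t\geq1
\]
(in fact equality holds, but the lower bound is all that is needed). Granting this, multiplicativity of $\mathcal{P}$ gives $G^{\left(t\right)}\in\mathcal{P}$, and since $\left\vert G^{\left(t\right)}\right\vert=t\left\vert G\right\vert$ we have $G^{\left(t\right)}\in\mathcal{P}_{t\left\vert G\right\vert}$, whence $t\,q\left(G\right)\leq q\left(G^{\left(t\right)}\right)\leq q\left(\mathcal{P}_{t\left\vert G\right\vert}\right)$. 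Dividing by $t\left\vert G\right\vert$ yields
\[
\frac{q\left(G\right)}{\left\vert G\right\vert}\leq\frac{q\left(\mathcal{P}_{t\left\vert G\right\vert}\right)}{t\left\vert G\right\vert},
\]
and letting $t\to\infty$ makes the right-hand side converge to $\nu\left(\mathcal{P}\right)$, because the full sequence $q\left(\mathcal{P}_{n}\right)/n$ converges to $\nu\left(\mathcal{P}\right)$ by Theorem \ref{limT} and any subsequence inherits the same limit. This gives exactly $q\left(G\right)\leq\nu\left(\mathcal{P}\right)\left\vert G\right\vert$.

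To prove the scaling relation I would use the Rayleigh characterization (\ref{Ral}). Let $\mathbf{x}=\left(x_{u}\right)_{u\in V\left(G\right)}$ be a unit eigenvector to $q\left(G\right)$, and define a test vector $\mathbf{y}$ on $G^{\left(t\right)}$ by assigning to each of the $t$ copies of a vertex $u$ the value $x_{u}/\sqrt{t}$. Spreading the mass evenly keeps $\mathbf{y}$ a unit vector, since $\sum_{u}x_{u}^{2}=1$ forces $\left\Vert\mathbf{y}\right\Vert^{2}=\sum_{u}t\cdot\left(x_{u}^{2}/t\right)=1$. Each edge $uv$ of $G$ becomes $t^{2}$ edges of $G^{\left(t\right)}$, each contributing $\left(x_{u}/\sqrt{t}+x_{v}/\sqrt{t}\right)^{2}=\left(x_{u}+x_{v}\right)^{2}/t$ to the quadratic form, so the total contribution of one blown-up edge is $t\left(x_{u}+x_{v}\right)^{2}$. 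Summing over $E\left(G\right)$ and invoking (\ref{Ral}) for $G$ gives $\left\langle Q\left(G^{\left(t\right)}\right)\mathbf{y},\mathbf{y}\right\rangle=t\sum_{uv\in E\left(G\right)}\left(x_{u}+x_{v}\right)^{2}=t\,q\left(G\right)$, and the Rayleigh principle then forces $q\left(G^{\left(t\right)}\right)\geq t\,q\left(G\right)$.

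The step I expect to require the most care is isolating the correct scaling relation and verifying it cleanly: one must check that the even-spreading test vector is genuinely a unit vector and that the edge bookkeeping is exact, namely that each edge $uv$ produces precisely $t^{2}$ edges of the blow-up and that no edges appear inside a blob. Everything afterward is a soft limiting argument resting on the existence of $\nu\left(\mathcal{P}\right)$ from Theorem \ref{limT} and on the two closure hypotheses: hereditariness is what makes $\nu\left(\mathcal{P}\right)$ meaningful at all, while multiplicativity is exactly what keeps the blow-ups inside $\mathcal{P}$. It is worth remarking that the relation is in fact an equality, reflecting that the normalized $Q$-index $q\left(G\right)/\left\vert G\right\vert$ is invariant under blow-ups; this is the conceptual reason the extremal bound is attained by complete regular $\omega$-partite graphs, which are themselves blow-ups of $K_{\omega}$.
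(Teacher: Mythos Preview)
Your argument is correct and complete: the Rayleigh-quotient construction of the even-spread test vector on $G^{(t)}$ gives $q\left(G^{(t)}\right)\geq t\,q(G)$ exactly as you compute, multiplicativity places $G^{(t)}$ in $\mathcal{P}_{t|G|}$, and dividing by $t|G|$ and sending $t\to\infty$ invokes Theorem~\ref{limT}. The edge bookkeeping and the unit-vector check are both right.

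As for comparison with the paper: there is nothing to compare. The paper does not prove Theorem~\ref{mT}; it is quoted from \cite{AbNi11} and used as a black box to deduce Theorem~\ref{mth} from Theorem~\ref{mthx}. Your write-up therefore supplies more than the paper does. The argument you give is the natural one and is almost certainly what \cite{AbNi11} contains, since the blow-up scaling $q\left(G^{(t)}\right)=t\,q(G)$ is the canonical mechanism linking multiplicativity to the limit $\nu(\mathcal{P})$. One minor remark: you say hereditariness is ``what makes $\nu(\mathcal{P})$ meaningful at all,'' which is accurate in that Theorem~\ref{limT} is stated only for hereditary properties; but note that your own argument never uses hereditariness directly---it enters only through the existence of the limit.
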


Easy arguments show that the class $\mathcal{P}^{\ast}\left(  K_{r+1}\right)
$ of graphs containing no $K_{r+1}$ is hereditary, unbounded and
multiplicative. Therefore, Theorem \ref{mT} implies that%
\begin{equation}
q(G)\leq\nu\left(  \mathcal{P}^{\ast}\left(  K_{r+1}\right)  \right)
\left\vert G\right\vert \label{in5}%
\end{equation}
for every $G\in\mathcal{P}^{\ast}\left(  K_{r+1}\right)  .$ But Theorem
\ref{mthx} implies that%
\[
\nu\left(  \mathcal{P}^{\ast}\left(  K_{r+1}\right)  \right)  \leq2\left(
1-1/r\right)
\]
and this inequality together with (\ref{in5}) completes the proof of Theorem
\ref{mth}.\bigskip

\textbf{Concluding remarks}\medskip

It is not hard to prove that for $\omega=2,$ equality in (\ref{min}) holds if
and only if $G$ is a complete bipartite graph. Also, for $\omega\geq3$
equality holds if $G$ is a regular complete $\omega$-partite graph. It seems
quite plausible that for $\omega\geq3$ this is the only case for equality in
(\ref{min}), but our proof does not give immediate evidence of this fact.

The result of this paper was presented at the workshop \textquotedblleft
Linear Algebraic Techniques in Combinatorics/Graph Theory\textquotedblright%
\ in February, 2011, at BIRS, Canada. Recently, an elegant proof of a slightly
more precise result was obtained independently by B. He, Y.L. Jin and X.D.
Zhang \cite{HJZ11}. We are grateful to these authors for sharing their
manuscript.\bigskip

\textbf{Acknowledgement}\medskip

This work was started in 2010 when the second author was visiting the Federal
University of Rio de Janeiro, Brazil. He is grateful for the warm hospitality
of the Spectral Graph Theory Group in Rio de Janeiro.


\begin{thebibliography}{99}                                                                                               %


\bibitem {AbNi11}N.M.M de Abreu and V. Nikiforov, Maxima of the $Q$-index:
abstract graph properties, to appear in \emph{Electron. J. Linear Algebra.}

\bibitem {AnMo85}W.N. Anderson, Jr, and T.D. Morley, Eigenvalues of the
Laplacian of a graph, \emph{Linear and Multilinear Algebra }\textbf{18
}(1985), 141-145.

\bibitem {Bol98}B. Bollob\'{a}s, \emph{Modern Graph Theory}\textit{,} Graduate
Texts in Mathematics, 184, Springer-Verlag, New York (1998), xiv+394 pp.

\bibitem {C10}D. Cvetkovi\'{c}, Spectral theory of graphs based on the
signless Laplacian, Research Report, (2010), available at:
$\emph{http://www.mi.sanu.ac.rs/projects/signless\_L\_reportApr11.pdf.}$

\bibitem {Cve72}D. Cvetkovi\'{c}, Chromatic number and the spectrum of a
graph, \emph{Publ. Inst. Math. (Beograd)} \textbf{14 (28) }(1972), 25-38.

\bibitem {CaFa09}G.X. Cai , Y.-Z. Fan, The signless Laplacian spectral radius
of graphs with given chromatic number, (in Chinese), \emph{Mathematica
Applicata} \textbf{22(1) }(2009),161-167.

\bibitem {HaLu09}P. Hansen and C. Lucas, An inequality for the signless
Laplacian index of a graph using the chromatic number, \emph{Graph Theory
Notes of New York }\textbf{LVII}\emph{ }(2009) 39-42.

\bibitem {HJZ11}B. He, Y.L. Jin and X.D. Zhang, Sharp bounds for the signless
Laplacian spectral radius in terms of clique number, \emph{to appear in Linear
Algebra Appl.}

\bibitem {Wil86}H. Wilf, Spectral bounds for the clique and independence
numbers of graphs,\emph{ J. Combin. Theory Ser. B} \textbf{40 }(1986), 113-117.

\bibitem {YWS11}G. Yu, Y. Wu and J. Shu, Signless Laplacian spectral radii of
graphs with given chromatic number \emph{Linear Algebra Appl.} \textbf{435}
(2011) 1813--1822.
\end{thebibliography}
\end{document}